\documentclass[a4paper,reqno]{amsart}

\usepackage[foot]{amsaddr}
\usepackage{graphicx,enumerate,nicefrac,color,bm,cite}
\usepackage{amsmath, amssymb, amstext, amsfonts}
\usepackage{subfig}
\usepackage{pgfplots}

\usepackage{algorithm,algpseudocode,tabularx}

\makeatletter
\newcommand{\multiline}[1]{%
  \begin{tabularx}{\dimexpr\linewidth-\ALG@thistlm}[t]{@{}X@{}}
    #1
  \end{tabularx}
}
\makeatother

\newcommand{\norm}[1]{\left\|#1\right\|}
\newcommand{\operator}[1]{\mathsf{#1}}

\newcommand{\F}{\operator{F}}   

\renewcommand{\H}{\operator{H}}
\newcommand{\G}{\operator{G}}

\newcommand{\dt}{\,\mathsf{d} t}

\newcommand{\x}{\bm{\mathsf{x}}}

\newcommand{\dx}{\,\mathsf{d} \x}
\newcommand{\dprod}[1]{\left<#1\right>}


\newtheorem{theorem}{Theorem}[section]

\theoremstyle{definition}
\newtheorem{remark}[theorem]{Remark}

\title[Adaptive damped Newton method]{An adaptive damped Newton method for strongly monotone and Lipschitz continuous operator equations}

\author{Pascal Heid}
\email{pascal.heid@ma.tum.de}
\address{Department of Mathematics, Technical University of Munich, Boltzmannstr.~3, 85748 Garching bei München, Germany \& Munich Center for Machine Learning (MCML)}


\keywords{%
Strongly monotone problems,
Damped Newton method,
Adaptive step-size strategy}

\subjclass[2010]{65J15, 49M15, 65M12}

\begin{document}

\begin{abstract}
We will consider the damped Newton method for strongly monotone and Lipschitz continuous operator equations in a variational setting. We will provide a very accessible justification why the undamped Newton method performs better than its damped counterparts in a vicinity of a solution. Moreover, in the given setting, an adaptive step-size strategy will be presented, which guarantees the global convergence and favours an undamped update if admissible.
\end{abstract}

\maketitle

\section{Introduction} \label{sec:introduction}
In these notes, we focus on the damped Newton method for strongly monotone and Lipschitz continuous operator equations on a real-valued Hilbert space $X$ with inner product $(\cdot,\cdot)_X$ and induced norm $\norm{\cdot}_X$. Specifically, given a nonlinear operator $\F:X \to X^\star$, we focus on the equation
\begin{align} \label{eq:F}
u \in X: \qquad \F(u)=0 \quad \text{in} \ X^\star;
\end{align} 
here, $X^\star$ denotes the dual space of $X$. In weak form, this problem is given by
\begin{align} \label{eq:weak}
u \in X: \qquad \dprod{\F(u),v}=0 \qquad \text{for all} \ v \in X,
\end{align}
where $\dprod{\cdot,\cdot}$ signifies the duality pairing in $X^\star \times X$. Throughout this work, we will impose the following structural assumptions on the nonlinear operator $\F: X \to X^\star$:
\begin{enumerate}
\item[(F1)] The operator $\F$ is Lipschitz continuous; i.e., there exists a constant $L >0$ such that 
\begin{align} \label{eq:lipschitz}
|\dprod{\F(u)-\F(v),w}| \leq L \norm{u-v}_X \norm{w}_X \qquad \text{for all} \ u,v,w \in X;
\end{align}
\item[(F2)] The operator $\F$ is strongly monotone; i.e., there exists a constant $\nu>0$ such that
\begin{align}
\dprod{\F(u)-\F(v),u-v} \geq \nu \norm{u-v}_X^2.
\end{align}
\end{enumerate} 
Under those conditions, it is well-known that the operator equation~\eqref{eq:F} has a unique solution $u^\star \in X$; see, e.g., \cite[\S3.3]{Necas:86} or~\cite[\S25.4]{Zeidler:90}. However, since $\F$ is assumed to be nonlinear, it is in general not feasible to solve for $x^\star \in X$. As a remedy, one may apply an iteration scheme to obtain an approximation $u^n$ of $u^\star$. The most famous method to approximate iteratively a root of a nonlinear operator $\F$ is the \emph{Newton method}, which is defined as follows: For a given initial guess $u^0 \in X$, we define recursively
\begin{align} \label{eq:Newton}
u^{n+1}=u^n - \F'(u^n)^{-1}\F(u^n), \qquad n \in \mathbb{N},
\end{align}  
where $\F'(u^n):X \to X^\star$ denotes the Gateaux derivative of $\F$ at $u^n \in X$. The main advantage of the Newton method over other iterative linearisation schemes is the local quadratic convergence rate. However, in many cases, the Newton scheme requires an initial guess close to a solution of the operator equation. To improve the lack of global convergence, one may introduce a damping parameter. Then, the \emph{damped} Newton method is given by
\begin{align} \label{eq:dampedNewton} 
u^{n+1}=u^n-\delta(u^n)\F'(u^n)^{-1}\F(u^n),
\end{align}
where $\delta(u^n)>0$ is a damping parameter, which may depend on the given iterate. Often, it is not clear how to choose the damping parameter optimally. Moreover, choices that guarantee the global convergence a priori may often entail an inferior convergence rate compared to the undamped scheme~\eqref{eq:Newton}. For an extensive overview of Newton methods for nonlinear problems we refer the interested reader to~\cite{Deuflhard:04}, and the references therein.

The motivation of the present work is to suggest an adaptive step-size selection, which yields, in the given setting, the global convergence and matches the performance of the classical Newton scheme locally. Moreover, we highlight in a variational setting that the undamped scheme is indeed optimal close to a solution.

\section{Convergence of the damped Newton method}

In many problems of scientific interest, $X$ is an infinite dimensional space, but numerical computations are carried out on finite dimensional subspaces, e.g., finite element spaces. We note, however, that our analysis equally applies to closed subspaces of $X$ --- infinite and finite dimensional ones. In order to guarantee the global convergence of the damped Newton method, we need to impose some further assumptions on $\F$:
\begin{enumerate}
\item[(F3)] The operator $\F$ is Gateaux differentiable. Moreover, $\F'$ is uniformly coercive and bounded in the sense that, for any given $u \in X$,
\begin{align} \label{eq:Fpcoercive}
\dprod{\F'(u)v,v} \geq \alpha_{\F'} \norm{v}_X^2 \qquad \text{for all} \ v \in X
\end{align}
and
\begin{align} \label{eq:Fpbounded}
\dprod{\F'(u)v,w} \leq \beta_{\F'} \norm{v}_X \norm{w}_X \qquad \text{for all} \ v,w \in X,
\end{align}
where $\alpha_{\F'},\beta_{\F'}>0$ are independent of $u$;
\item[(F4)] There exists a Gateaux differentiable functional $\G:X \to \mathbb{R}$ such that $\G'(u)=\F'(u)u$ in $X^\star$ for any $u \in X$, and $\G':X \to X^\star$ is continuous with respect to the weak topology in $X^\star$;
\item[(F5)] $\F$ is a potential operator; i.e., there exists a Gateaux differentiable functional $\H:X \to \mathbb{R}$ with $\H'=\F$.
\end{enumerate}
Then, the following convergence result holds.

\begin{theorem}[{\hspace{1sp}\cite[Theorem~2.6]{HeidWihler:2020a}}] \label{thm:convergence}
Assume {\rm (F1)--(F5)} and that $\delta:X \to [\delta_{\min},\delta_{\max}]$ for some $0<\delta_{\min}\leq \delta_{\max}<\nicefrac{2 \alpha_{\F'}}{L}$. Then, the damped Newton method~\eqref{eq:dampedNewton} converges to the unique solution $u^\star \in X$ of~\eqref{eq:F}.
\end{theorem}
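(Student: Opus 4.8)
The plan is to use the potential $\H$ supplied by (F5) as a Lyapunov functional: I will show that $\H$ strictly decreases along the iteration at a rate controlled by the Newton correction, that $\H$ is bounded below (indeed coercive) by virtue of strong monotonicity, and then harvest the vanishing of the correction to obtain strong convergence to $u^\star$. As a preliminary step I would note that, by (F3) and the Lax--Milgram theorem, the bilinear form $(v,w)\mapsto\dprod{\F'(u^n)v,w}$ is bounded and coercive, so $\F'(u^n)\colon X\to X^\star$ is a boundedly invertible isomorphism and the Newton correction $d^n:=\F'(u^n)^{-1}\F(u^n)\in X$ is well defined; the iteration then reads $u^{n+1}=u^n-\delta_n d^n$ with $\delta_n:=\delta(u^n)\in[\delta_{\min},\delta_{\max}]$. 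From $\F(u^n)=\F'(u^n)d^n$ and (F3) I record the two estimates $\dprod{\F(u^n),d^n}=\dprod{\F'(u^n)d^n,d^n}\ge\alpha_{\F'}\norm{d^n}_X^2$ and $\norm{\F(u^n)}_{X^\star}\le\beta_{\F'}\norm{d^n}_X$, which will be used at the two ends of the argument.

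Next I would establish the energy decrease. Since $\H'=\F$ is Lipschitz continuous by (F1), the scalar map $t\mapsto\H(u^n-t\delta_n d^n)$ is continuously differentiable on $[0,1]$, so the fundamental theorem of calculus gives
\begin{align*}
\H(u^{n+1})-\H(u^n)
&=-\delta_n\int_0^1\dprod{\F(u^n-t\delta_n d^n),d^n}\dt\\
&=-\delta_n\dprod{\F(u^n),d^n}-\delta_n\int_0^1\dprod{\F(u^n-t\delta_n d^n)-\F(u^n),d^n}\dt .
\end{align*}
Bounding the first term by $-\delta_n\alpha_{\F'}\norm{d^n}_X^2$ and, via (F1), the integrand in the second term by $Lt\delta_n\norm{d^n}_X^2$, I obtain
\[
\H(u^{n+1})-\H(u^n)\le-\delta_n\bigl(\alpha_{\F'}-\tfrac{L}{2}\delta_n\bigr)\norm{d^n}_X^2\le-\kappa\norm{d^n}_X^2,
\]
with $\kappa:=\delta_{\min}\bigl(\alpha_{\F'}-\tfrac{L}{2}\delta_{\max}\bigr)>0$ precisely because $\delta_{\max}<\nicefrac{2\alpha_{\F'}}{L}$. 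Hence $(\H(u^n))_n$ is non-increasing.

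To conclude, I would first use strong monotonicity (F2) together with $\F(u^\star)=0$ and the fundamental theorem of calculus along the segment from $u^\star$ to $u$ to obtain the lower bound $\H(u)\ge\H(u^\star)+\tfrac{\nu}{2}\norm{u-u^\star}_X^2$; in particular $\H$ is bounded below, so $(\H(u^n))_n$ converges. Telescoping the energy decrease then yields $\kappa\sum_{n\ge0}\norm{d^n}_X^2\le\H(u^0)-\lim_n\H(u^n)<\infty$, whence $\norm{d^n}_X\to0$ and therefore $\norm{\F(u^n)}_{X^\star}\le\beta_{\F'}\norm{d^n}_X\to0$. A final application of (F2), namely
\[
\nu\norm{u^n-u^\star}_X^2\le\dprod{\F(u^n)-\F(u^\star),u^n-u^\star}\le\norm{\F(u^n)}_{X^\star}\norm{u^n-u^\star}_X ,
\]
gives $\norm{u^n-u^\star}_X\le\nu^{-1}\norm{\F(u^n)}_{X^\star}\to0$, which is the assertion.

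I expect the only genuinely delicate point to be the energy-descent estimate: one must carefully justify the $C^1$-regularity in $t$ of $\H(u^n-t\delta_n d^n)$ (this is exactly where Lipschitz continuity of $\F$ enters) and keep precise track of the competition between $\alpha_{\F'}$ and $L$, so that the hypothesis $\delta_{\max}<\nicefrac{2\alpha_{\F'}}{L}$ is seen to be the sharp condition making $\kappa$ positive. I would remark that assumption (F4), and in fact the weak-continuity clauses, play no role in this streamlined argument; they become relevant when the exact linearisation $\F'(u^n)$ is replaced by a non-exact surrogate, in which case the functional $\G$ from (F4) provides the appropriate Lyapunov functional, but here they are simply retained for consistency with the cited reference.
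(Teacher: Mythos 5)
Your proof is correct. Note that the paper does not prove Theorem~\ref{thm:convergence} itself --- it defers to \cite[Theorem~2.6]{HeidWihler:2020a} and only records the key decay estimate~\eqref{eq:help} --- so the comparison is really with that cited proof. Your energy-descent bound $\H(u^{n+1})-\H(u^n)\le -\delta_n\bigl(\alpha_{\F'}-\tfrac{L}{2}\delta_n\bigr)\norm{d^n}_X^2$ is precisely~\eqref{eq:help} rewritten via $\norm{u^{n+1}-u^n}_X=\delta_n\norm{d^n}_X$, so up to that point you follow the same route. Where you genuinely diverge is in closing the argument: the cited proof is formulated for general iterative linearization schemes (Zarantonello, Ka\v{c}anov, Newton with a generic uniformly coercive and bounded preconditioner) and invokes the weak-continuity hypothesis (F4), whereas you exploit the exactness of the Newton linearization, $\F(u^n)=\F'(u^n)d^n$, to convert square-summability of the corrections into $\F(u^n)\to 0$ in $X^\star$ and then conclude strong convergence directly from strong monotonicity, with the coercivity bound $\H(u)\ge \H(u^\star)+\tfrac{\nu}{2}\norm{u-u^\star}_X^2$ supplying boundedness of $\H$ from below. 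This gives a shorter, self-contained argument for the Newton scheme specifically, and your remark that (F4) is dispensable here is consistent with this; what the cited approach buys is uniformity over a whole class of linearizations. Two points you handled implicitly but correctly: $\kappa=\delta_{\min}\bigl(\alpha_{\F'}-\tfrac{L}{2}\delta_{\max}\bigr)>0$ is a valid uniform bound because the two factors $\delta_n$ and $\alpha_{\F'}-\tfrac{L}{2}\delta_n$ are separately bounded below by positive constants, and the use of the fundamental theorem of calculus is justified since $t\mapsto\dprod{\F(u^n-t\delta_n d^n),d^n}$ is Lipschitz by (F1), so $t\mapsto\H(u^n-t\delta_n d^n)$ is indeed of class $\mathrm{C}^1$.
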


In many cases we have that $\nicefrac{2 \alpha_{\F'}}{L}<1$. This, however, excludes the choice $\delta(u^n)=1$, which yields the local quadratic convergence rate. By examining the proof of~\cite[Theorem~2.6]{HeidWihler:2020a}, one realises that the specific upper bound on the damping parameter was imposed to guarantee the existence of a constant $C_{\H}>0$ such that
\begin{align} \label{eq:Hdecay}
\H(u^n)-\H(u^{n+1}) \geq C_\H \norm{u^n-u^{n+1}}_X^2 \qquad \text{for all} \ n \in \mathbb{N};
\end{align}  
we also refer to the closely related work~\cite{HeidPraetoriusWihler:2020}, especially Eq.~(2.9) in that reference. Indeed, in the proof of~\cite[Theorem 2.6]{HeidWihler:2020a} it is shown that
\begin{align} \label{eq:help}
\H(u^n)-\H(u^{n+1}) \geq \left(\frac{\alpha_{\F'}}{\delta(u^n)}-\frac{L}{2}\right) \norm{u^n-u^{n+1}}_X^2.
\end{align}
Consequently, given that $\delta(u^n) \leq \delta_{\max} < \nicefrac{2 \alpha_{\F'}}{L}$, we have
\begin{align} \label{eq:CH}
\H(u^n)-\H(u^{n+1}) \geq \left(\frac{\alpha_{\F'}}{\delta_{\max}}-\frac{L}{2}\right)\norm{u^n-u^{n+1}}_X^2=C_\H \norm{u^n-u^{n+1}}_X^2,
\end{align}
where $C_\H:=\left(\frac{\alpha_{\F'}}{\delta_{\max}}-\frac{L}{2}\right)>0$. By no means, however, is $\delta(u^n) \leq \delta_{\max} < \nicefrac{2 \alpha_{\F'}}{L}$ a necessary condition for~\eqref{eq:Hdecay}. The following generalised convergence theorem remains true, which can be easily verified by following along the lines of the proof~\cite[Theorem 2.6]{HeidWihler:2020a}.

\begin{theorem} \label{thm:generalisedconvergence}
Given {\rm (F1)--(F5)}. For any damping strategy that guarantees $\delta(u^n) \in [\delta_{\min},\delta_{\max}]$ and ~\eqref{eq:Hdecay} for all $n \in \mathbb{N}$, where $0<\delta_{\min} \leq \delta_{\max}<\infty$ and $C_{\H}>0$ are any given constants independent of $n$, we have that $u^n \to u^\star$ as $n \to \infty$. 
\end{theorem}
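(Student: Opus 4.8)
The plan is to reproduce the argument of \cite[Theorem~2.6]{HeidWihler:2020a}, the only structural change being that the energy-decay estimate $\eqref{eq:Hdecay}$, which in the original is derived from the bound $\delta_{\max}<\nicefrac{2\alpha_{\F'}}{L}$, is now taken as a hypothesis. I would organise the proof in three steps. \emph{Step 1 (the energy decreases, is bounded below, and the increments are square-summable).} Since $\H'=\F$ by (F5) and $\F$ is strongly monotone by (F2), I would integrate the monotonicity inequality along the segment $t\mapsto u^\star+t(v-u^\star)$ and use $\F(u^\star)=0$ to obtain $\H(v)\ge\H(u^\star)+\tfrac{\nu}{2}\norm{v-u^\star}_X^2$ for all $v\in X$; thus $\H$ is bounded below by $\H(u^\star)$. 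By $\eqref{eq:Hdecay}$ the sequence $(\H(u^n))_n$ is non-increasing, hence convergent, and telescoping $\eqref{eq:Hdecay}$ gives $\sum_n\norm{u^n-u^{n+1}}_X^2\le C_\H^{-1}\big(\H(u^0)-\H(u^\star)\big)<\infty$, so that $\norm{u^n-u^{n+1}}_X\to 0$ as $n\to\infty$.

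\emph{Step 2 (the residual vanishes).} By (F3) the bilinear form $(v,w)\mapsto\dprod{\F'(u^n)v,w}$ is bounded and coercive, uniformly in $n$, so by the Lax--Milgram theorem $\F'(u^n)^{-1}\colon X^\star\to X$ exists and $\eqref{eq:dampedNewton}$ rearranges to $\F(u^n)=-\delta(u^n)^{-1}\F'(u^n)(u^{n+1}-u^n)$. Testing against $v\in X$ with $\norm{v}_X\le 1$, using $\eqref{eq:Fpbounded}$ and $\delta(u^n)\ge\delta_{\min}$, yields $\norm{\F(u^n)}_{X^\star}\le\tfrac{\beta_{\F'}}{\delta_{\min}}\norm{u^{n+1}-u^n}_X$, which tends to $0$ by Step~1. \emph{Step 3 (convergence).} Strong monotonicity (F2) together with $\F(u^\star)=0$ gives $\nu\norm{u^n-u^\star}_X^2\le\dprod{\F(u^n)-\F(u^\star),u^n-u^\star}\le\norm{\F(u^n)}_{X^\star}\norm{u^n-u^\star}_X$, hence $\norm{u^n-u^\star}_X\le\nu^{-1}\norm{\F(u^n)}_{X^\star}\to 0$, which is the assertion.

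I expect the only genuinely delicate point to be the bookkeeping in Step~1, namely justifying the identity $\H(v)-\H(u^\star)=\int_0^1\dprod{\F(u^\star+t(v-u^\star)),v-u^\star}\,\mathsf{d}t$: this uses the Gâteaux differentiability of $\H$ from (F5) and the continuity of $\F$ along segments from (F1) in order to invoke the fundamental theorem of calculus, after which strong monotonicity is applied under the integral sign. An alternative closing, closer to the original proof and the reason (F4) is listed, is to observe from Step~1 that $(u^n)_n$ lies in the bounded sublevel set $\{\H\le\H(u^0)\}$, extract a weakly convergent subsequence $u^{n_k}\rightharpoonup\bar u$, and identify $\bar u=u^\star$ using $\eqref{eq:Fpbounded}$ and the weak continuity of $\G'=\F'(\cdot)(\cdot)$, a standard subsequence argument then upgrading this to $u^n\to u^\star$. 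In either case, once $\eqref{eq:Hdecay}$ is available the proof is a line-by-line transcription of \cite[Theorem~2.6]{HeidWihler:2020a} with the explicit constant of $\eqref{eq:CH}$ replaced by the abstract $C_\H$; note that the finiteness of $\delta_{\max}$ plays no role in Steps~1--3.
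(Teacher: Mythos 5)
Your proof is correct. The paper itself does not write the argument out --- it simply states that the theorem ``can be easily verified by following along the lines of the proof of [HeidWihler:2020a, Theorem~2.6]'', the point being that the bound $\delta_{\max}<\nicefrac{2\alpha_{\F'}}{L}$ was only ever used to produce~\eqref{eq:Hdecay}, which is now a hypothesis. Your reconstruction is exactly such an argument, and each step checks out: the lower bound $\H(v)\ge\H(u^\star)+\tfrac{\nu}{2}\norm{v-u^\star}_X^2$ follows from (F5), (F2) and the fundamental theorem of calculus along segments (continuity of the integrand being supplied by (F1)); telescoping~\eqref{eq:Hdecay} then gives $\norm{u^{n+1}-u^n}_X\to 0$; the rearranged iteration together with~\eqref{eq:Fpbounded} and $\delta(u^n)\ge\delta_{\min}>0$ gives $\norm{\F(u^n)}_{X^\star}\le \beta_{\F'}\delta_{\min}^{-1}\norm{u^{n+1}-u^n}_X\to 0$; and strong monotonicity with $\F(u^\star)=0$ converts this into $\norm{u^n-u^\star}_X\to 0$. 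If anything, your closing via strong monotonicity is slightly more streamlined than the cited general framework, which is formulated for arbitrary linearization operators and is where the weak-continuity assumption (F4) enters; your version makes explicit that, once~\eqref{eq:Hdecay} is assumed, neither (F4) nor the finiteness of $\delta_{\max}$ is actually used --- a correct and worthwhile observation, not a gap.
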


\section{Local optimality of the classical Newton scheme}
Before we introduce our step-size strategy, which is borrowed from~\cite[Section 3.1]{HeidWihlerm2an:2022}, we first remark that the unique solution of the operator equation~\eqref{eq:F} is equally the unique minimiser of the potential $\H$, see~\cite[Theorem 25.F]{Zeidler:90}. Furthermore, the damped Newton method can be reformulated as
\begin{align} \label{eq:dampedNewton2}
u^{n+1}=u^n-\delta^n \rho^n,
\end{align}
where $\delta^n$ is the damping parameter and $\rho^n=\F'(u^n)^{-1}\F(u^n)$ is the undamped update at the given iterate. Equivalently, we have that
\begin{align} \label{eq:diffrho}
u^{n+1}-u^n=-\delta^n \rho^n. 
\end{align}

Since we want to obtain the unique minimiser $u^\star \in X$ of $\H$, a sensible strategy is to choose the step-size $\delta^n$ in~\eqref{eq:dampedNewton2} in such a way that, for given $u^n \in X$, the difference
\begin{align}
\H(u^{n+1})-\H(u^{n})
\end{align}
is minimised. Indeed, this corresponds to a maximal decay of the potential at step $n+1$. We will now follow along the lines of~\cite[Section 3.1]{HeidWihlerm2an:2022}, where the strategy was originally developed, to the best of our knowledge, as a possible choice of the damping parameter of a modified Ka\v{c}anov scheme. Let us recall that $\H$ is the potential of $\F$, and thus $\H'=\F$. Consequently, thanks to the fundamental theorem of calculus, we have that
\begin{align} \label{eq:Hdiff}
\H(u^{n+1})-\H(u^{n})=\int_0^1 \dprod{\F(u^{n}+t(u^{n+1}-u^n)),u^{n+1}-u^n} \dt.
\end{align}
Let $\psi(t):=\dprod{\F(u^{n}+t(u^{n+1}-u^n)),u^{n+1}-u^n}$ be the integrand of the right-hand side above. In view of~\eqref{eq:diffrho} and the linearity of the dual product, the integrand $\psi$ can be rewritten as
\begin{align} \label{eq:integrand}
\psi(t)=-\delta^n \dprod{\F(u^n-t \delta^n \rho^n),\rho^n}.
\end{align}
In order to minimise~\eqref{eq:Hdiff}, we will employ a first order Taylor approximation to \eqref{eq:integrand} at $t=0$. Specifically, a straightforward calculation reveals that
\begin{align} \label{eq:psitaylor}
\psi(t) \approx -\delta^n \dprod{\F(u^n),\rho^n}+t(\delta^n)^2 \dprod{\F'(u^n)\rho^n,\rho^n}.
\end{align} 
Plugging~\eqref{eq:psitaylor} into \eqref{eq:Hdiff} and, subsequently, integrating from zero to one yields
\begin{align} \label{eq:approxHdiff}
\H(u^{n+1})-\H(u^{n}) \approx \frac{1}{2}(\delta^n)^2 \dprod{\F'(u^n)\rho^n,\rho^n}-\delta^n \dprod{\F(u^n),\rho^n}.
\end{align}
Then, since $\F'(u^n)$ is coercive, it follows immediately that the right-hand side of~\eqref{eq:approxHdiff} is minimised for
\begin{align} \label{eq:optimalstep}
\delta^n=\frac{\dprod{\F(u^n),\rho^n}}{\dprod{\F'(u^n)\rho^n,\rho^n}}=\frac{\dprod{\F(u^n),\rho^n}}{\dprod{\F(u^n),\rho^n}}=1;
\end{align}
here, we used that $\F'(u^n)\rho^n=\F'(u^n)\F'(u^n)^{-1}\F(u^n)=\F(u^n)$. Furthermore, the approximations~\eqref{eq:psitaylor} and~\eqref{eq:approxHdiff}, respectively, become more accurate the smaller $\norm{\rho^n}_X$ gets, and thus especially in a neighbourhood of a solution. In particular, this means that $\delta \equiv 1$ is indeed the optimal local damping parameter in a vicinity of the solution.

\section{Adaptive damped Newton algorithm}

Now we shall present our adaptive damped Newton method, cf.~Algorithm~\ref{alg:AdaptivedampedNewton}, which guarantees the global convergence and favours the step-size $\delta^n=1$, if admissible, leading to local quadratic convergence rate. This algorithm is closely related to~\cite[Algorithm~1]{HeidWihlerm2an:2022}. 

\begin{algorithm}
\caption{Adaptive damped Newton method}
\label{alg:AdaptivedampedNewton}
\begin{flushleft} 
\textbf{Input:} Initial guess $u^0 \in X$, correction factor $\sigma \in (0,1)$, and $\theta \in (0,0.5]$. 
\end{flushleft}
\begin{algorithmic}[1]
\For {$n=0,1,2,\dotsc$}
\State Solve the linear problem $\F'(u^n)\rho^n=\F(u^n)$ for $\rho^n\in X$.
\State Set $\delta^n=1$. 
\Repeat  	
\State Compute $u^{n+1}=u^{n}-\delta^n\rho^n$.
\State Update $\delta^n \gets  \max \left\{\sigma \delta^n,\nicefrac{\alpha_{\F'}}{L}\right\}$.
\Until {$\H(u^{n})-\H(u^{n+1}) \ge  \theta \min\left\{\alpha_{\F'},L\right\}\norm{u^{n}-u^{n+1}}_X^2$}.
\EndFor
\end{algorithmic}
\end{algorithm}

\begin{theorem}
Given the assumptions {\rm (F1)--(F5)}, the sequence $\{u^n\}$ generated by Algorithm~\ref{alg:AdaptivedampedNewton} converges strongly to the unique solution $u^\star \in X$ of the operator equation~\eqref{eq:F}.
\end{theorem}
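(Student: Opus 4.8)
The plan is to deduce the assertion from Theorem~\ref{thm:generalisedconvergence}. First note that the update carried out in Algorithm~\ref{alg:AdaptivedampedNewton}, namely $u^{n+1}=u^{n}-\delta^n\rho^n$ with $\rho^n=\F'(u^n)^{-1}\F(u^n)$, is precisely a damped Newton step~\eqref{eq:dampedNewton} with damping parameter $\delta^n$. Hence it suffices to establish three things: (i) for every $n$ the inner loop terminates after finitely many passes, so that the algorithm is well defined and produces an infinite sequence $\{u^n\}$; (ii) the accepted damping parameters satisfy $\delta^n\in[\delta_{\min},\delta_{\max}]$ for fixed constants $0<\delta_{\min}\le\delta_{\max}<\infty$ independent of $n$; and (iii) the iterates obey~\eqref{eq:Hdecay} for some fixed $C_{\H}>0$. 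Assertion~(iii) is built into the algorithm, since the loop exits only once $\H(u^{n})-\H(u^{n+1})\ge\theta\min\{\alpha_{\F'},L\}\norm{u^{n}-u^{n+1}}_X^2$, so~\eqref{eq:Hdecay} holds with $C_{\H}:=\theta\min\{\alpha_{\F'},L\}>0$, a constant independent of $n$.

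The core of the argument is therefore (i)--(ii). If $\rho^n=0$, then $\F(u^n)=\F'(u^n)\rho^n=0$, so $u^n=u^\star$, the update gives $u^{n+1}=u^{n}$, the stopping test holds trivially, and the sequence is thenceforth constant and equal to $u^\star$. So assume $\rho^n\neq 0$, whence $u^{n+1}\neq u^{n}$ for every admissible $\delta^n>0$. The step-sizes with which $u^{n+1}$ is computed inside the loop form the sequence $d_1=1$, $d_{k+1}=\max\{\sigma d_k,\nicefrac{\alpha_{\F'}}{L}\}$, that is $d_k=\max\{\sigma^{k-1},\nicefrac{\alpha_{\F'}}{L}\}$; it is non-increasing, bounded below by $\nicefrac{\alpha_{\F'}}{L}>0$, and --- since $\sigma\in(0,1)$ and $\nicefrac{\alpha_{\F'}}{L}\le 1$ (because $\alpha_{\F'}\le L$, a consequence of~\eqref{eq:lipschitz} and~\eqref{eq:Fpcoercive}) --- it equals the floor value $\nicefrac{\alpha_{\F'}}{L}$ from some finite index on. The decisive observation is that as soon as $\delta^n=\nicefrac{\alpha_{\F'}}{L}$, estimate~\eqref{eq:help} --- which is valid for any $\delta^n>0$, its derivation using no upper bound on the damping parameter --- yields, since then $\alpha_{\F'}/\delta^n=L$,
\begin{align*}
\H(u^{n})-\H(u^{n+1})\;\ge\;\left(L-\frac{L}{2}\right)\norm{u^{n}-u^{n+1}}_X^2\;=\;\frac{L}{2}\norm{u^{n}-u^{n+1}}_X^2\;\ge\;\theta\min\{\alpha_{\F'},L\}\norm{u^{n}-u^{n+1}}_X^2,
\end{align*}
where the last step uses $\theta\le\nicefrac{1}{2}$ and $\min\{\alpha_{\F'},L\}\le L$. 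Hence the stopping test is met no later than the pass at which the step-size hits the floor $\nicefrac{\alpha_{\F'}}{L}$, which proves (i); and since the accepted $\delta^n$ is one of the $d_k$, we get $\delta^n\in[\nicefrac{\alpha_{\F'}}{L},1]$, which proves (ii) with $\delta_{\min}=\nicefrac{\alpha_{\F'}}{L}$ and $\delta_{\max}=1$.

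Having verified (i)--(iii), the implicit damping strategy of Algorithm~\ref{alg:AdaptivedampedNewton} satisfies all hypotheses of Theorem~\ref{thm:generalisedconvergence} with $\delta_{\min}=\nicefrac{\alpha_{\F'}}{L}$, $\delta_{\max}=1$, and $C_{\H}=\theta\min\{\alpha_{\F'},L\}$, and that theorem yields $u^n\to u^\star$ strongly in $X$, as claimed. I expect the main obstacle to be the finite-termination argument in (i), and within it the realisation that the threshold $\theta\le\nicefrac{1}{2}$ and the step-size floor $\nicefrac{\alpha_{\F'}}{L}$ are calibrated precisely so that~\eqref{eq:help} forces acceptance of the step once the floor is reached; the remaining manipulations are routine bookkeeping, and the degenerate case $\rho^n=0$ needs only the one-line remark above.
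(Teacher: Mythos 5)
Your proposal is correct and follows essentially the same route as the paper: reduce to Theorem~\ref{thm:generalisedconvergence} and show the repeat-loop terminates because, once the step-size reaches the floor $\nicefrac{\alpha_{\F'}}{L}$, estimate~\eqref{eq:help} gives a decay constant $\nicefrac{L}{2}\geq\theta\min\{\alpha_{\F'},L\}$, so the acceptance test must be met. The only differences are cosmetic --- you argue directly rather than by contradiction, and you add the (welcome but inessential) remarks justifying $\alpha_{\F'}\leq L$ and treating the degenerate case $\rho^n=0$.
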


\begin{proof}
Let $C_\H:=\theta \min\{\alpha_{\F'},L\}$. Moreover, without loss of generality we may assume that $\nicefrac{\alpha_{\F'}}{L} \leq 1$. In particular, we have that $0<\delta_{\min}:=\nicefrac{\alpha_{\F'}}{L} \leq 1=:\delta_{\max}$. Thanks to Theorem~\ref{thm:generalisedconvergence}, it only remains to prove~\eqref{eq:Hdecay}. By line 7 of Algorithm~\ref{alg:AdaptivedampedNewton}, this amounts to verify that the repeat-loop (lines 4--7) in Algorithm~\ref{alg:AdaptivedampedNewton} terminates for each $n \in \mathbb{N}$. By contradiction, assume that this was not the case; i.e.~there exists $n \in \mathbb{N}$ such that the statement in line 7 is never satisfied. For this specific $n \in \mathbb{N}$, we have after finitely many steps that $\delta^n=\nicefrac{\alpha_{\F'}}{L}$. Recalling~\eqref{eq:help}, we find that 
\begin{align*}
\H(u^n)-\H(u^{n+1}) &\geq \left(\frac{\alpha_{\F'}}{\delta^n}-\frac{L}{2}\right)\norm{u^{n}-u^{n+1}}_X^2 \\
&=\frac{L}{2} \norm{u^{n}-u^{n+1}}_X^2 \\
&\geq C_{\H} \norm{u^n-u^{n+1}}_X^2;
\end{align*}
i.e., the repeat-loop terminates, which yields the desired contradiction.
\end{proof}

\begin{remark}
Of course, Algorithm~\ref{alg:AdaptivedampedNewton} could be further improved by taking into account the accepted damping parameter from the previous step. However, for simplicity of the presentation, we will stick to our current algorithm.
\end{remark}

\section{Application to quasilinear elliptic diffusion models} 

In the following, let $X:=H^1_0(\Omega)$ denote the standard Sobolev space of $H^1$-functions on $\Omega$ with zero trace along the boundary $\Gamma:=\partial \Omega$, where $\Omega \subset \mathbb{R}^d$, $d \in \mathbb{N}$, is an open and bounded domain with Lipschitz boundary. The inner product and norm on $X$ are defined by $(u,v)_X:=\int_\Omega \nabla u \cdot \nabla v \dx$ and $\norm{u}_X^2=\int_\Omega |\nabla u|^2 \dx$, respectively. We will consider the quasilinear elliptic partial differential equation
\begin{align} \label{eq:modelproblem}
u \in X: \qquad \F(u):=-\nabla \cdot \left\{\mu\left(|\nabla u|^2\right)\nabla u\right\}-g=0 \qquad \text{in} \ X^\star;
\end{align}
here, $\mu:\mathbb{R}_{\geq 0} \to \mathbb{R}_{\geq 0}$ is a diffusion coefficient and $g \in H^{-1}(\Omega) = X^\star$ a given source function. Models of the form~\eqref{eq:modelproblem} are widely applied in physics, for instance in plasticity and elasticity, as well as in hydro- and gas-dynamics. Moreover, they also served as our model problems in our previous and closely related works~\cite{HeidWihler:2020a, HeidWihler:2020b, HeidPraetoriusWihler:2020,HeidWihlerm2an:2022}. 

To guarantee that the properties {\rm (F1)--(F5)} are satisfied in the given setting, we need to impose the following assumptions on the diffusion coefficient:
\begin{enumerate} 
\item[(M1)] The diffusion coefficient $\mu:\mathbb{R}_{\geq 0} \to \mathbb{R}_{\geq 0}$ is continuously differentiable and monotonically decreasing; i.e., $\mu'(t) \leq 0$ for all $t \geq 0$.
\item[(M2)] There exist positive constants $0<m_\mu<M_{\mu}<\infty$ such that
\begin{align} \label{eq:muprop}
m_\mu(t-s) \leq \mu(t^2)t-\mu(s^2)s\leq M_\mu (t-s) \qquad \text{for all} \ t \geq s \ge 0.
\end{align}
\end{enumerate}
The assumption~{\rm (M2)} implies {\rm (F1)} and {\rm (F2)} with $L=3 M_\mu$ and $\nu=m_\mu$, respectively; see, e.g.~\cite[Proposition 25.26]{Zeidler:90}. Furthermore, $\F$ is Gateaux differentiable and the derivative satisfies~\eqref{eq:Fpcoercive} and~\eqref{eq:Fpbounded} with $\alpha_{\F'}=m_\mu$ and $\beta_{\F'}=2M_\mu-m_\mu$, respectively; we refer to the proof of~\cite[Proposition 5.3]{HeidWihler:2020a}. Note that, in the given setting, $\nicefrac{2\alpha_{\F'}}{L}=\nicefrac{2m_\mu}{3 M_{\mu}}<1$, and thus the convergence of the classical Newton method is not guaranteed a priori. For the verification of (F4) we refer to~\cite[Lemma 5.4]{HeidWihler:2020a} and the proof of~\cite[Proposition 5.3]{HeidWihler:2020a}. Finally, $\F$ has the potential
\begin{align} \label{eq:Hdeff}
\H(u):=\int_\Omega \psi(|\nabla u|^2) \dx-\dprod{g,u},
\end{align}
where $\psi(s):=\frac{1}{2} \int_0^s \mu(t) \dt$.

\subsection{Numerical experiments}

Now we will run two experiments to test our adaptive algorithm. For that purpose, we will discretise the continuous problem with a conforming $P1$-finite element method. Moreover, we will compute an accurate approximation of the discrete solution with the Ka\v{c}anov iteration scheme, see, e.g.~\cite[\S4.5]{Necas:86} or~\cite[\S25.14]{Zeidler:90}, which is guaranteed to convergence given the assumption {\rm (M1)}. In Algorithm~\ref{alg:AdaptivedampedNewton}, we set the parameters $\sigma=0.8$ and $\theta=0.1$ for both of our experiments. 

\experiment \label{exp:1} In our first experiment, we consider the L-shaped domain $\Omega:=(-1,1)^2 \setminus ([0,1] \times [0,1])$ and the diffusion coefficient $\mu(t)=(t+1)^{-1}+\nicefrac{1}{2}$. It is straightforward to verify that $\mu$ satisfies {\rm (M1)} and {\rm (M2)}; specifically, it can be shown that $m_\mu = \nicefrac{3}{8}$ and $M_\mu=\nicefrac{3}{2}$. The source function $g$ is chosen in such a way that the exact solution of the continuous problem is given by $u^\star(x,y)=\sin(\pi x) \sin(\pi y)$, where $(x,y) \in \mathbb{R}^2$ denote the Euclidean coordinates. Finally, we consider the constant null function as our initial guess.

In this experiment, we compare the performance of Algorithm~\ref{alg:AdaptivedampedNewton} and the damped Newton method with fixed step-size $\delta=\nicefrac{\alpha_{\F'}}{L}=\nicefrac{m_\mu}{3 M_\mu}$. We can observe in Figure~\ref{fig:Exp1} that, in the given setting, $\delta \equiv 1$ is an admissible step-size and leads to a quadratic decay rate of the error. In contrast, the fixed damping parameter $\delta=\nicefrac{m_\mu}{3 M_\mu}$, which is chosen in accordance to Theorem~\ref{thm:convergence}, leads to a very poor convergence rate.

\begin{figure}
\centering
{\includegraphics[width=0.7\textwidth]{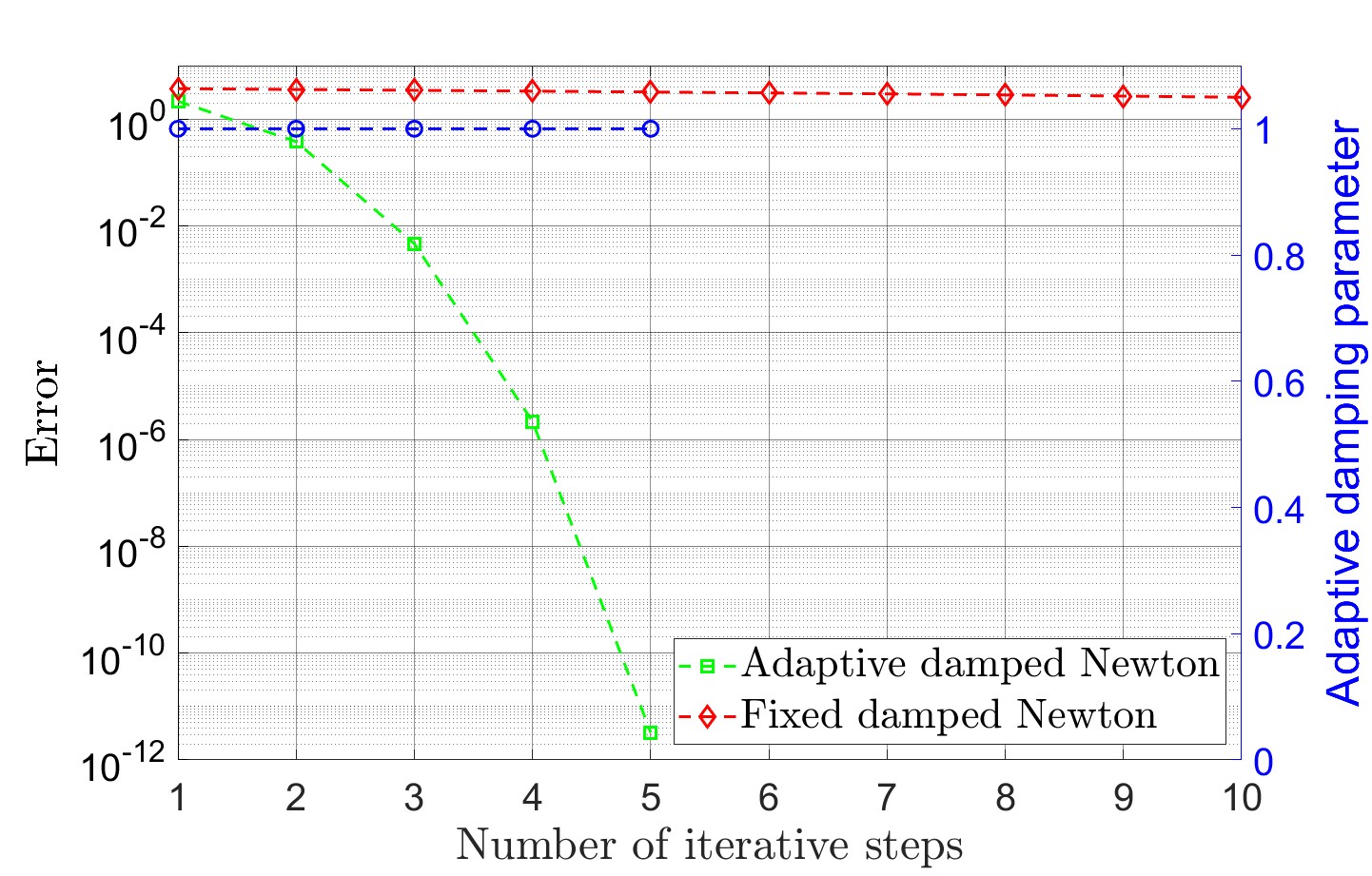}}
 \caption{Experiment~\ref{exp:1}: Comparison of the adaptive damped Newton method and the damped Newton method with fixed step-size.}\label{fig:Exp1}
\end{figure}

\experiment \label{exp:2} In our second numerical test, we consider the square domain $\Omega:=(0,1)^2$. Furthermore, our diffusion coefficient is given by
\begin{align} \label{eq:diffBE}
\mu(t):=\frac{\gamma}{\sqrt{t+k^{-2}}} +2\zeta,
\end{align} 
for some constants $k,\gamma,\zeta>0$; this function is based on the Bercovier--Engelman regularisation of the viscosity coefficient of a Bingham fluid, cf.~\cite{Bercovier:80}. Clearly, $\mu$ is differentiable and decreasing. Furthermore, the assumption {\rm (M2)} is satisfied with $m_\mu=2 \zeta$ and $M_\mu=2 \zeta + k \gamma$, see~\cite[Lemma 4.1]{HeidSuli2:21}. For our experiment, we will set $\gamma=0.3$, $\zeta=1$, and $k=100$. For simplicity, we will consider the same source function $g$ as in the previous experiment and set $u^0(x,y)=\sin(\pi x) \sin(\pi y)$, or more precisely its piecewise linear interpolation in the nodes of the mesh. 

Here, we consider the adaptive damped Newton method from Algorithm~\ref{alg:AdaptivedampedNewton} and the classical Newton scheme. As can be observed in Figure~\ref{fig:Exp2}, the undamped scheme does not converge in the given setting. In contrast, and in accordance with the theory, the adaptive step-size strategy from Algorithm~\ref{alg:AdaptivedampedNewton} leads to the convergence of the damped Newton method. Moreover, after a first phase of reduced step-sizes, and in turn of inferior decay rate, the damping parameter finally becomes one, which then leads to the local quadratic convergence rate. 

\begin{figure}
\centering
{\includegraphics[width=0.7\textwidth]{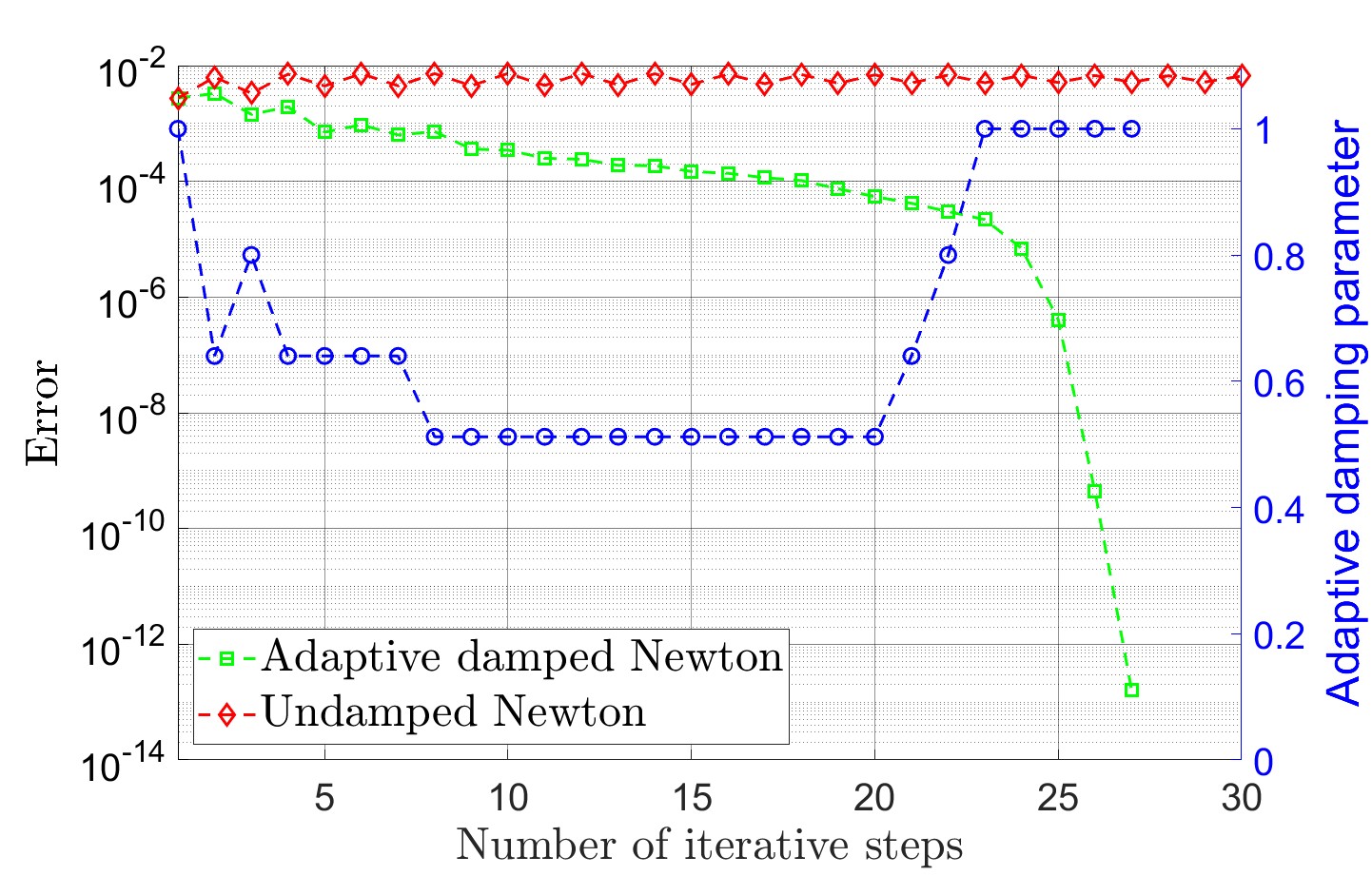}}
 \caption{Experiment~\ref{exp:2}: Comparison of the adaptive damped Newton method and the classical Newton method.}\label{fig:Exp2}
\end{figure}

\section{Conclusion}
We have presented an adaptively damped Newton method, which converges, under suitable assumptions, for any initial guess to the unique solution in the context of strongly monotone and Lipschitz continuous operator equations. Moreover, our numerical experiments highlighted that this algorithm may indeed converge in situations where the classical Newton scheme fails, and, nonetheless, exhibits the favourable local quadratic convergence rate of the undamped method.  

\bibliographystyle{amsalpha}
\bibliography{references}
\end{document}